\theoremstyle{plain}
\numberwithin{equation}{section}
\newtheorem{thm}{Theorem}[section]
\newtheorem{theorem}[thm]{Theorem}
\newtheorem{lemma}[thm]{Lemma}
\newtheorem{remark}[thm]{Remark}
\begin{document}
\setcounter{page}{1}

\title{Generalization of a Ramanujan identity}
\author{Örs Rebák}
\address{Independent scholar\\
               Győr, Hungary}
\email{rebakors@gmail.com}

\begin{abstract}
The Euler product for the Landau--Ramanujan constant could have motivated a curious identity by Ramanujan that appears in his notebooks two times. This observation involves a square root and the first four prime numbers of the form $4n+3$, i.e., $3, 7, 11, 19$. Berndt asks whether Ramanujan's identity is an isolated result, or if there are other identities of this type. With this work we would like to give a possible answer to Berndt's question.
\end{abstract}

\keywords{Ramanujan's identity, Ramanujan's notebooks, Landau--Ramanujan constant}
\subjclass[2010]{11A67, 11E25}

\maketitle

\section{Introduction}

Let $B(x)$ denote the number of positive integers not exceeding $x$ that can be expressed as a sum of two squares. Landau \cite{Landau}, \cite[pp.~641--669]{Landau2} in 1908 showed that
\[
B(x) \sim K \frac{x}{\sqrt{\log x}} \text{ \, as } x \to \infty,
\]
where $K$ is a constant. Independently, Ramanujan in his first letter to Hardy \cite[pp.~52 and 60--62]{BerndtIV}, \cite[p.~24]{BerndtRankin}, \cite[p.~xxiv]{Hardy} in 1913 stated the following. The number of numbers greater than $A$ and less than $x$ that can be expressed as a sum of two
squares is
\[
K \int_A^x \frac{dt}{\sqrt{\log t}} + \theta(x),
\]
where $K= 0.764\dots$ and $\theta(x)$ is very small when compared with the previous integral. This statement also appears in Ramanujan's second and third notebooks \cite[pp.~307 and 363]{Ramanujan}.\\
\indent
Since then the quantity $K$ has been known as the Landau--Ramanujan constant \cite[pp.~98--104]{Finch}. An exact formula for $K$ is given by its Euler product expansion
\[
K = \frac{1}{\sqrt{2}} \prod_{p} \left(\frac{1}{1-1/p^2}\right)^{1/2},
\]
where $p$ runs through the primes of the form $4n+3$. This could have motivated the following observation that appears in Ramanujan's notebooks \cite[pp.~309 and 363]{Ramanujan} two times:
\begin{equation}\label{eq:ri}
\sqrt{2\left(1-\frac{1}{3^2}\right)\left(1-\frac{1}{7^2}\right)\left(1-\frac{1}{11^2}\right)\left(1-\frac{1}{19^2}\right)} = \left(1+\frac{1}{7}\right)\left(1+\frac{1}{11}\right)\left(1+\frac{1}{19}\right).
\end{equation}
``It may be somewhat interesting to note,'' Ramanujan wrote about \eqref{eq:ri} in one of his letters to Hardy \cite[p.~177]{BerndtRankin}, referring to the fact that the squared numbers on the left-hand side are the first four prime numbers of the form $4n+3$. We examine the identity~\eqref{eq:ri} that can be found in Berndt's book \cite[p.~20]{BerndtIV} and also in the Andrews--Berndt book \cite[pp.~410--411]{AndrewsBerndt} with only a brief discussion.

\section{Generalization of Ramanujan's identity}

Berndt \cite[p.~20, Entry~6]{BerndtIV} asks whether Ramanujan's identity in \eqref{eq:ri} is an isolated result, or if there are other identities of this type. The result of this section is a possible answer to Berndt's question.

\begin{lemma}\label{lemma-1} Let $n \geq m \geq 1$ be integers, and let $\left(a_k\right)$ be a sequence of real numbers such that $a_k \neq 0,1$ for all $k=m,\dots,n$ and $-1 < a_\ell < 0$ for an even number of $\ell \in \{m,\dots,n\}$ indices. Then
\begin{equation}\label{eq:lemma}
\sqrt{\prod_{k=m}^n \frac{a_k+1}{a_k-1} \cdot \prod_{k=m}^n \left(1-\frac{1}{a^2_k}\right)} = \prod_{k=m}^n \left(1+\frac{1}{a_k}\right).
\end{equation}
\end{lemma}
\begin{proof}
Because of the condition on the elements of $(a_k)$, the expression under the square root and the right-hand side are nonnegative. Both sides of \eqref{eq:lemma} are equal to zero if and only if $a_k = -1$ for some $k \in \{m,\dots,n\}$. Suppose that $a_k \neq -1$ for all $k=m,\dots,n$. Note that for a given real number $a \neq 0,\pm 1$, we have
\begin{equation}\label{eq:obs}
 \frac{a+1}{a-1} = \frac{1+\frac{1}{a}}{1-\frac{1}{a}} = \frac{\left(1+\frac{1}{a}\right)^2}{\left(1-\frac{1}{a}\right)\left(1+\frac{1}{a}\right)} =  \frac{\left(1+\frac{1}{a}\right)^2}{1-\frac{1}{a^2}}.
\end{equation}
By using this observation, straightforward arithmetic gives the result.
\end{proof}

Henceforth we use Lemma~\ref{lemma-1} to deduce such identities of the form of \eqref{eq:lemma}, which have a closed-form expression for the product $\prod_{k=m}^n \frac{a_k+1}{a_k-1}$.

\begin{theorem}[Generalization of Ramanujan's identity]\label{genr} Let $a \in \left(-\infty,-2/3\right) \cup \left(-1/2,-1/3\right] \cup \left(\left(-1/6,\infty\right) \setminus \{0,1\}\right)$ be a real number. Then
\begin{multline}\label{eq:gi}
\sqrt{\frac{a+1}{a-1}\left(1-\frac{1}{a^2}\right)\left(1-\frac{1}{(2a+1)^2}\right)\left(1-\frac{1}{(3a+2)^2}\right)\left(1-\frac{1}{(6a+1)^2}\right)}
\\ = \left(1+\frac{1}{2a+1}\right)\left(1+\frac{1}{3a+2}\right)\left(1+\frac{1}{6a+1}\right).
\end{multline}
\end{theorem}

By substituting $a=3$ into \eqref{eq:gi}, we arrive at Ramanujan's identity~\eqref{eq:ri}.

\begin{proof}
Suppose that $a \neq -1/3$. We can use Lemma~\ref{lemma-1} with $(a_k) = (2a+1,3a+2,6a+1)$. For the first product of \eqref{eq:lemma}, we find that
\[
 \prod_{k=1}^3 \frac{a_k+1}{a_k-1} = \frac{\left(2a+1\right)+1}{\left(2a+1\right)-1} \cdot  \frac{\left(3a+2\right)+1}{\left(3a+2\right)-1} \cdot  \frac{\left(6a+1\right)+1}{\left(6a+1\right)-1} = \frac{\left(a+1\right)^2}{a^2}.
\]
On the other hand, if we suppose that $a \neq -1$, by using \eqref{eq:obs}, we have
\[
\frac{a+1}{a-1}\left(1-\frac{1}{a^2}\right)  = \frac{\left(1+\frac{1}{a}\right)^2}{1-\frac{1}{a^2}}\left(1-\frac{1}{a^2}\right)  = \frac{\left(a+1\right)^2}{a^2}.
\]
Since both sides of \eqref{eq:gi} are equal to zero if and only if $a=-1$ or $a=-1/3$, the proof is complete.
\end{proof}

\begin{remark}[Alternative form]\label{agenr}
 It is clear from the proof of Theorem~\ref{genr} that the following identity holds. Let $a \in \left(-\infty,-2/3\right) \cup \left(-1/2,-1/3\right] \cup \left(\left(-1/6,\infty\right) \setminus \{0\}\right)$ be a real number. Then
\begin{multline}\label{eq:gialt}
\frac{a+1}{a}\cdot\sqrt{\left(1-\frac{1}{(2a+1)^2}\right)\left(1-\frac{1}{(3a+2)^2}\right)\left(1-\frac{1}{(6a+1)^2}\right)}
\\ = \left(1+\frac{1}{2a+1}\right)\left(1+\frac{1}{3a+2}\right)\left(1+\frac{1}{6a+1}\right).
\end{multline}
\end{remark}

We can derive similar identities by using Lemma~\ref{lemma-1} with the sequence $(a_k) = (2a+1,\linebreak 3a+1,6a+5)$ or with $(a_k) = (2a+1,4a+1,4a+3)$ with a suitable condition on $a$. By substituting $a=1$ into \eqref{eq:gialt}, we find that
\begin{equation}\label{eq:oddi}
2\cdot\sqrt{\left(1-\frac{1}{3^2}\right)\left(1-\frac{1}{5^2}\right)\left(1-\frac{1}{7^2}\right)} = \left(1+\frac{1}{3}\right)\left(1+\frac{1}{5}\right)\left(1+\frac{1}{7}\right).
\end{equation}
We generalize \eqref{eq:oddi} for further odd denominators in \eqref{eq:thm-1-3} of Theorem~\ref{thm-1}.

\section{Identities with telescoping products}

In this section we use Lemma~\ref{lemma-1} with appropriate $(a_k)$ sequences, for which the product $\prod_{k=m}^n \frac{a_k+1}{a_k-1}$  has a telescoping property.

\begin{theorem}\label{thm-1} Let $n$ and $m$ be integers. For $n\geq m \geq 2$, we have
\begin{equation}\label{eq:thm-1-1}
\sqrt{ \frac{n(n+1)}{m(m-1)} \cdot \prod_{k=m}^n \left(1- \frac{1}{k^2} \right)} = \prod_{k=m}^n \left(1+\frac{1}{k}\right).
\end{equation}
For $n\geq m \geq 1$, we have
\begin{equation}\label{eq:thm-1-2}
\sqrt{ \frac{2n+1}{2m-1} \cdot \prod_{k=m}^n \left(1-\frac{1}{(2k)^2}\right)} = \prod_{k=m}^n\left(1+\frac{1}{2k}\right)
\end{equation}
and
\begin{equation}\label{eq:thm-1-3}
\sqrt{ \frac{n+1}{m} \cdot \prod_{k=m}^n \left(1-\frac{1}{(2k+1)^2}\right) } = \prod_{k=m}^n \left(1+\frac{1}{2k+1}\right).
\end{equation}

\end{theorem}

Note that \eqref{eq:thm-1-1} gives $\left(n+1\right)/m$, which appears under the square root in \eqref{eq:thm-1-3}.

\begin{proof}
In order to prove \eqref{eq:thm-1-1}, according to Lemma~\ref{lemma-1}, we have to show the closed-form of a telescoping product. We find that
\begin{align*}
\prod_{k=m}^n \frac{k+1}{k-1} &= \frac{m+1}{m-1} \cdot \frac{m+2}{m} \cdot \frac{m+3}{m+1} \cdot \, \cdots \, \cdot \frac{n-1}{n-3} \cdot \frac{n}{n-2} \cdot \frac{n+1}{n-1} \\ &= \frac{n(n+1)}{m(m-1)}.
\end{align*}
The proofs of \eqref{eq:thm-1-2} and \eqref{eq:thm-1-3} are analogous.
\end{proof}
\pagebreak
Berndt's question may have other interesting answers. It would be worth examining Lemma~\ref{lemma-1} further with various $(a_k)$ sequences. In the following theorem, we use $\left(a_k\right)=\left(k^3\right)$.

\begin{theorem} Let $n \geq m \geq 2$ be integers. Then
\[
\sqrt{\frac{m(m-1)+1}{m(m-1)} \cdot \frac{n(n+1)}{n(n+1)+1} \cdot \prod_{k=m}^{n} \left(1-\frac{1}{k^6}\right)} = \prod_{k=m}^{n} \left(1+\frac{1}{k^3}\right).
\]
\end{theorem}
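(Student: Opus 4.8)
The plan is to invoke Lemma~\ref{lemma-1} with the sequence $(a_k)=(k^3)$. For $k\geq m\geq 2$ we have $|a_k|=k^3\geq 8>1$, so the hypothesis of the lemma holds, and the lemma immediately reduces the claim to proving the closed form of the finite product
$$
\prod_{k=m}^{n}\frac{k^3+1}{k^3-1}=\frac{m(m-1)+1}{m(m-1)}\cdot\frac{n(n+1)}{n(n+1)+1}.
$$
Indeed, once this is established, Lemma~\ref{lemma-1} gives $\sqrt{\prod_{k=m}^n\frac{k^3+1}{k^3-1}\cdot\prod_{k=m}^n(1-1/k^6)}=\prod_{k=m}^n(1+1/k^3)$, which is exactly the asserted identity.

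The key step is the factorization $k^3+1=(k+1)(k^2-k+1)$ and $k^3-1=(k-1)(k^2+k+1)$, combined with the observation that $k^2+k+1=(k+1)^2-(k+1)+1$. Setting $g(k)=k^2-k+1=k(k-1)+1$ (which is nonzero, in fact positive, for all real $k$), this says $k^2+k+1=g(k+1)$, so that
$$
\frac{k^3+1}{k^3-1}=\frac{(k+1)\,g(k)}{(k-1)\,g(k+1)}=\frac{k+1}{k-1}\cdot\frac{g(k)}{g(k+1)}.
$$
Both factors on the right telescope: we have $\prod_{k=m}^{n}\frac{k+1}{k-1}=\frac{n(n+1)}{m(m-1)}$, which is precisely the product computed in the proof of Theorem~\ref{thm-1}\ref{th-1-1}, and $\prod_{k=m}^{n}\frac{g(k)}{g(k+1)}=\frac{g(m)}{g(n+1)}=\frac{m(m-1)+1}{n(n+1)+1}$ since $g(n+1)=(n+1)^2-(n+1)+1=n(n+1)+1$. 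Multiplying these two closed forms yields $\frac{n(n+1)}{m(m-1)}\cdot\frac{m(m-1)+1}{n(n+1)+1}$, which is the same as $\frac{m(m-1)+1}{m(m-1)}\cdot\frac{n(n+1)}{n(n+1)+1}$ after rearranging the numerator and denominator; this completes the reduction and hence the proof.

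The only genuine obstacle is recognizing the factorization together with the shift $k^2+k+1=g(k+1)$; after that the argument is a routine double telescoping, entirely parallel to the proof of Theorem~\ref{thm-1}. I would also record, for completeness, that the denominators $k^3-1$ and $g(k+1)=k^2+k+1$ never vanish for $k\geq 2$, so every step above is legitimate over the stated range of parameters.
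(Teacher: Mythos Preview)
Your proof is correct and follows essentially the same approach as the paper: both invoke Lemma~\ref{lemma-1} with $a_k=k^3$ and then evaluate $\prod_{k=m}^n\frac{k^3+1}{k^3-1}$ via the factorizations of $k^3\pm 1$ together with the shift identity $k^2+k+1=(k+1)^2-(k+1)+1$. The only cosmetic difference is that you split the product into two separate telescoping factors $\frac{k+1}{k-1}\cdot\frac{g(k)}{g(k+1)}$, whereas the paper first shifts the numerator index to write $\prod\frac{k^3+1}{k^3-1}=\frac{m^3+1}{(n+1)^3+1}\prod\frac{(k+1)^3+1}{k^3-1}=\frac{m^3+1}{(n+1)^3+1}\prod\frac{k+2}{k-1}$ and then simplifies; the underlying algebra is identical.
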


\begin{proof}
According to Lemma~\ref{lemma-1}, we have to deduce the following.
\begin{alignat*}{2}
\prod_{k=m}^n \frac{k^3+1}{k^3-1} &= \frac{m^3+1}{\left(n+1\right)^3+1} \cdot \prod_{k=m}^n \frac{\left(k+1\right)^3+1}{k^3-1} &&= \frac{m^3+1}{\left(n+1\right)^3+1} \cdot \prod_{k=m}^n \frac{k+2}{k-1} \\
&= \frac{m^3+1}{\left(n+1\right)^3+1} \cdot \frac{n(n+1)(n+2)}{(m-1)m(m+1)} &&= \frac{m(m-1)+1}{m(m-1)} \cdot \frac{n(n+1)}{n(n+1)+1}.\tag*{\qedhere}
\end{alignat*}
\end{proof}

\medskip

\subsubsection*{Acknowledgments} The author would like to thank the referees for their valuable suggestions, which greatly improved the presentation of the paper. The comments of Sándor Bozóki (MTA SZTAKI) and Máté Horváth (CrySyS Lab) are highly appreciated.

\medskip
\end{document}